 \newcommand\relphantom[1]{\mathrel{\phantom{#1}}}
\markboth {\rm } {\rm }
\numberwithin{equation}{section}
\newtheorem{theorem}{Theorem}[section]
\newtheorem{lemma}[theorem]{Lemma}
\theoremstyle{definition}
\newtheorem{definition}{Definition}[section]
\theoremstyle{remark}
\newtheorem{remark}{\bf{Remark}}
\title[Energy and cross-helicity conservation for 3D MHD equations]{Energy and cross-helicity conservation for the three-dimensional ideal MHD equations in bounded domain}
\author[Wang]{Yi Wang}
\address[Yi Wang]{\newline \newline CEMS, HCMS, NCMIS, Academy of Mathematics and Systems Science, Chinese Academy of Sciences, Beijing 100190, P. R. China
\newline and School of Mathematical Sciences, University of Chinese Academy of Sciences, Beijing 100049, P. R. China}
\email{wangyi@amss.ac.cn}
\author[Zuo]{Bijun Zuo}
\address[Bijun Zuo]{\newline Institute of Applied Mathematics, AMSS, Academia Sinica, Beijing 100190, P. R. China
\newline and School of Mathematical Sciences, University of Chinese Academy of Sciences, Beijing 100049, P. R. China}
\email{bjzuo@amss.ac.cn}
\thanks{\textbf{Acknowledgment.} Y. Wang is supported by NSFC grants No. 11671385 and 11688101 and CAS Interdisciplinary Innovation Team}
\begin{document}

\date{\today}

\begin{abstract}
	In this paper, we prove the energy and cross-helicity conservation of weak solutions to the three-dimensional ideal MHD equations in bounded domain under the interior Besov regularity conditions which are exactly same as the three-dimensional periodic domain case in \cite{Caflisch 97}, and the boundedness and the Besov-type continuity for both the velocity and magnetic fields near the boundary, which seem crucial for the bounded domain case due to the boundary effect. Note that the Besov-type continuity condition near the boundary is consistent with the interior Besov regularity, which is a new condition we proposed in the present paper.
\end{abstract}

\maketitle \centerline{\date}



\section{Introduction}
Energy conservation is an important issue for weak solutions to both the Euler and the MHD equations.
For the incompressible Euler equations, there are many literatures on the energy conservation since the celebrated Onsager's conjecture proposed by Onsager
\cite{Onsager 49} in 1949. Motivated by the laws of turbulence, Onsager first conjectured that there exists a threshold regularity of the weak solutions beyond which the energy is conserved. More precisely, he guessed that for any weak solution to the 3D Euler equations belonging to the H\"{o}lder space $C^\alpha$, the energy is conserved provided that exponent $\alpha>\frac{1}{3}$, and for any $\alpha<\frac{1}{3}$, there exists a weak solution which dissipates the energy.
Then Eyink \cite{Eyink 94} proved that the energy conservation holds true for any weak solution in a subset solution space $C^\alpha_\ast\subset C^\alpha$ with $C^\alpha_\ast$ being the space of functions satisfying the Fourier property $\sum_{k\in\mathbb{Z}^d}|k^\alpha||\hat{\mathbf{u}}(k)|<\infty$.
Later on, Constantin, E and Titi \cite{Constantin 94} proved the Onsager's conjecture for the energy conservation under the Besov regularity $B_{3,\infty}^\alpha$ with $C^\alpha\subset B_{3,\infty}^\alpha$ for the weak solutions to 3D Euler equations in periodic domain.
Then Duchon and Robert \cite{Duchon 00} proved that the local energy equality holds in the sense of distributions provided that the velocity satisfies integration condition $\int_{T^d}|\mathbf{u}(x+y,t)-\mathbf{u}(x)|^3 dx\leq C(t)|y|\sigma(|y|)$, $\forall y\in \mathbb{T}^d$, where $C$ is an integrable function on $[0,T]$ and $\sigma(a)$ tends to $0$ with $a$.
In 2008, Cheskidov, Constantin, Friedlander and Shvydkoy \cite{Cheskidov 08} further sharpened the previous results by using the well-known Littlewood-paley decomposition.

Note that all of the above results are concerned with the Euler flow in the whole space or periodic domain without boundary. For the 3D incompressible Euler equations in bounded domain,
the Onsager's conjecture for the energy conservation was proved by Bardos and Titi \cite{Bardos 18} recently under the H\"older continuity conditions up to the boundary. Then Drivas and Nguyen \cite{Drivas 18} proved the energy conservation by requiring not only the interior Besov regularity, but also the boundedness and the wall-continuity near the boundary for weak solution.

For the case of the incompressible and inhomogeneous Euler equations with the transport of the fluid density and the incompressible or compressible Navier-Stokes equations with the viscosity, the energy conservation was proved by Yu \cite{Yu C 16}, \cite{Yu C 17}, \cite{Yu C 18}, Chen and Yu \cite{Robin Ming Chen 17}, and Chen, Liang, Wang and Xu \cite{Robin Ming Chen 18}.

In the present paper, we are concerned with the energy and cross-helicity conservation for the weak solution to the three-dimensional ideal magneto-hydrodynamics (MHD) equations in bounded domain, which reads
\begin{equation}\label{1.1}
\begin{cases}
\partial_t\mathbf{u}+(\mathbf{u}\cdot\nabla)\mathbf{u}-(\mathbf{b}\cdot\nabla)\mathbf{b}+\nabla \pi=0,\\
\partial_t\mathbf{b}+(\mathbf{u}\cdot\nabla)\mathbf{b}-(\mathbf{b}\cdot\nabla)\mathbf{u}=0, \quad\quad\quad \rm{in}\;Q_T:=\Omega\times[0,T),\\
\nabla\cdot\mathbf{u}=0,\quad\quad\nabla\cdot\mathbf{b}=0,
\end{cases}
\end{equation}
with the Dirichlet boundary condition
\begin{equation}\label{1.2}
\mathbf{u}\cdot\mathbf{n}|_{\partial\Omega}=\mathbf{b}\cdot\mathbf{n}|_{\partial\Omega}=0.
\end{equation}
Here the spatial variable $x\in\Omega\subset \mathbb{R}^3$ with $\Omega$ being a bounded domain with $C^2$ boundary $\partial \Omega$ and the normal direction $\mathbf{n}$, the time variable $t\in[0,T)$ with any fixed $T>0$, and $\mathbf{u}=\mathbf{u}(x,t):Q_T\rightarrow \mathbb{R}^3$ represents the fluid velocity, $\mathbf{b}=\mathbf{b}(x,t):Q_T\rightarrow \mathbb{R}^3$ is the magnetic field, and $\pi=p+\frac{1}{2}|\mathbf{b}|^2:Q_T\rightarrow \mathbb{R}$ is the magnetic pressure, with $p=p(x,t)$ being the fluid pressure.

Then the conserved quantities for the total energy $\mathcal{E}(t)$ and the cross-helicity $\mathcal{H}(t)$ considered in the present paper are defined by
\begin{equation}\label{1.4}
\mathcal{E}(t):=\int_\Omega |\mathbf{u}(x,t)|^2+|\mathbf{b}(x,t)|^2dx,
\end{equation}
\begin{equation}\label{1.5}
\mathcal{H}(t):=\int_\Omega\mathbf{u}(x,t)\cdot\mathbf{b}(x,t)dx.
\end{equation}
Note that the total energy $\mathcal{E}(t)$ in \eqref{1.4} include the fluid kinetic energy and the magnetic energy, and the cross-helicity $\mathcal{H}(t)$ in \eqref{1.5} measures the degree of the linkage of the vortex and the magnetic flux tubes within the flow from a geometric point of view (see \cite{Yos}).

For the ideal MHD equations \eqref{1.1}, Caflisch, Klapper and Steele \cite{Caflisch 97} proved the energy conservation in a periodic domain with no boundary effect by extending the results \cite{Constantin 94} to the ideal MHD equations in a straightforward manner. Then Kang and Lee \cite{Kang 07} proved the energy and cross-helicity conservation in the whole space by the Littlewood-paley decomposition as in \cite{Cheskidov 08}.
Later on, Yu \cite{Yu Xinwei 09} improved the previous results by exploring the special structure of the nonlinear terms in the ideal MHD equations.

A natural question then is when the energy and cross-helicity conservation hold true for the 3D ideal MHD equations \eqref{1.1} in bounded domain.
Compared with the incompressible Euler equations, the ideal MHD equations have higher nonlinearity due to the couplings of the fluid velocity and the magnetic field.
Moreover, quite different from the whole space or periodic domain case, another new difficulty here is how to control the velocity $\mathbf{u}$, the magnetic field $\mathbf{b}$, the magnetic pressure $\pi$ and their couplings near the physical boundary.
To overcome these difficulties, besides the interior Besov regularity on $\mathbf{u}$ and $\mathbf{b}$, which is same as the 3D periodic domain case in \cite{Caflisch 97}, the boundedness and the Besov-type continuity near the boundary are also crucial for the energy and cross-helicity conservation of the 3D ideal MHD equations \eqref{1.1} in bounded domain. Note that the boundedness of $(\mathbf{u}, \mathbf{b}, \pi)$ near the boundary is motivated by \cite{Drivas 18} for the energy conservation of the incompressible Euler equations in bounded domain and the Besov-type continuity condition near the boundary here seems consistent with the interior Besov regularity, which is a new condition we proposed in the present paper.

\section{Main result}
We first introduce some notations and terminologies which will be used throughout this paper.
For the bounded domain $\Omega\subset \mathbb{R}^3$, we define the following distance function for $x\in\Omega$
 \[d(x):=\inf_{y\in\partial\Omega}|x-y|,\]
 and the following subdomains
\begin{equation}\label{2.1}
\Omega_\varepsilon:=\{x\in\Omega\mid d(x)<\varepsilon\}, \quad\Omega^\varepsilon:=\Omega\setminus\overline{\Omega}_\varepsilon,
\end{equation}
where $\overline{\Omega}_\varepsilon$ denotes the closure of $\Omega_\varepsilon$ in the usual Euclidean topology. Since $\partial\Omega$ is $C^2$, there exists a constant $h_0>0$ which may depend on $\Omega$ satisfying (cf. \cite{Robert})
\begin{enumerate}[($i$)]
\item  $d\in C^1(\overline{\Omega_{h_0}})$;

\item for any $ x\in{\Omega_{h_0}}$, there exists a unique point $m(x)\in\partial\Omega$ such that
\begin{equation}\label{2.2}
d(x)=|x-m(x)|,\quad\quad\nabla d(x)=-\mathbf{n}(m(x)).
\end{equation}
\end{enumerate}

 Let $l,h$ be two positive parameters with $0<l<h$ (In fact, we choose $l=\frac{h}{16}$ in the following),
 and define $\eta_{h,l}:\mathbb R\rightarrow \mathbb R$ to be a smooth function with compact support satisfying
\begin{equation}
\eta_{h,l}(z)=
\begin{cases}
0,&z\in(-\infty,h-l],\\
1,&z\in[h,+\infty).
\end{cases}
\end{equation}
The cut-off function $\theta_{h,l}: \Omega\rightarrow \mathbb R$ is defined by
\begin{equation}
  \theta_{h,l}(x):=\eta_{h,l}(d(x)).
\end{equation}
It is easy to check that $\|\eta'_{h,l}(x)\|_{L^\infty(R)}\leq \frac{C}{l}$, $supp\,\theta_{h,l}\subset\Omega^{h-l}$ and  $supp\,\nabla\theta_{h,l}\subset\Omega_h\setminus\Omega_{h-l}$.

Let $\phi\in C_c^\infty(\mathbb R^3)$ be a non-negative radial function satisfying $supp\,\phi\subset \{x\in \mathbb R^3\mid|x|\leq 1\}$ and $\int_{\mathbb R^3}\phi(x) dx=1$.
For any $f\in L^1_{loc}(\Omega)$, we define its mollification $f^l(x)$ with respect to $l$ by
\begin{equation}\label{2.3}
f^l(x)=(f\ast\phi^l)(x):=\int_{\Omega}\phi^l(x-y)f(y)dy=\int_{|y|<1}\phi^l(y)f(x-y)dy,\,\,x\in \Omega^l,
\end{equation}
where $\phi^l(x)=\frac{1}{l^3}\phi(\frac{x}{l})$ is the standard scaling function.

We then give the definition of Besov space. For $1\leq p\leq \infty$, $0<\alpha<1$, the Besov space $B^\alpha_{p,\infty}(\Omega)$ is defined by
\begin{equation}\label{1-19}
B^\alpha_{p,\infty}(\Omega):=\{f\in L^p(\Omega): \|f\|_{L^p(\Omega)}+\sup_{|y|>0}\frac{\|f(\cdot)-f(\cdot-y)\|_{L^p(\Omega\cap(\Omega+\{y\})}}{|y|^\alpha}<\infty\},
\end{equation}
where  $\Omega+\{y\}:=\{x\in R^3| x=z+y, z\in\Omega\}$ with any $y\not\equiv0$ means the translation of the domain $\Omega$ according to $y$.

Next, we recall the definition of the weak solution to the ideal MHD equations.
\begin{definition}\label{2.1.}
We call the triple $(\mathbf{u},\mathbf{b},\pi)$ a weak solution to the ideal MHD equations \eqref{1.1} if
\begin{enumerate}[($i$).]

\item $(\mathbf{u},\mathbf{b})\in L^\infty((0,T);L^2(\Omega))\times L^\infty((0,T);L^2(\Omega)),  \,\,\pi\in L^1_{loc}(0,T;L^1_{loc}(\Omega))$;

\item for any $\mathbf{\Phi}\in C_c^\infty(\Omega\times(0,T);\mathbb R^3)$, it holds that
\begin{equation}\label{2.7}
\begin{split}
&\int_0^T\int_{\Omega}\left(\mathbf{u}\cdot\partial_t\mathbf{\Phi}
+\mathbf{u}\otimes\mathbf{u}:\nabla\mathbf{\Phi} -\mathbf{b}\otimes\mathbf{b}:\nabla\mathbf{\Phi}+\pi\nabla\cdot\mathbf{\Phi}\right) dxdt=0,
\end{split}
\end{equation}
and
\begin{equation}\label{2.8}
\int_0^T\int_{\Omega}\left(\mathbf{b}\cdot\partial_t\mathbf{\Phi}+ \mathbf{b}\otimes\mathbf{u}:\nabla\mathbf{\Phi} -\mathbf{u}\otimes\mathbf{b}:\nabla\mathbf{\Phi}\right) dxdt=0;
\end{equation}

\item for any $\varphi\in C_c^\infty(\Omega)$, it holds that
\begin{equation}\label{2.9}
\int_{\Omega}\mathbf{u}\cdot\nabla\varphi dx=\int_{\Omega}\mathbf{b}\cdot\nabla\varphi dx=0,
\end{equation}
a.e. $t\in (0,T)$.

\item the boundary condition \eqref{1.2} holds in some weak sense.

\end{enumerate}
\end{definition}

Now we state our main result.
\begin{theorem}\label{1.1.}
Let $(\mathbf{u},\mathbf{b},\pi)$ be a weak solution to the 3D ideal MHD equations \eqref{1.1} in $Q_T$ in the sense of Definition \ref{2.1.}. Assume that
\begin{enumerate}[(i).]

\item(Interior Besov regularity) for any $\Gamma\subset\subset\Omega$, there exist $\alpha_1,\alpha_2\in(0,1)$ such that
\begin{equation}\label{1-1}
\mathbf{u}\in L^3(0,T;B_{3,\infty}^{\alpha_1}(\Gamma)),\,\,
\mathbf{b}\in L^3(0,T;B_{3,\infty}^{\alpha_2}(\Gamma)),
\end{equation}

\item(Boundedness near the boundary) there exists a $\sigma_0>0$ such that
\begin{equation}\label{1-2}
\mathbf{u},\mathbf{b}\in L^3(0,T;L^\infty(\Omega_{\sigma_0})),\,\,
\pi\in L^{3/2}(0,T;L^\infty(\Omega_{\sigma_0})),
\end{equation}

\item(Besov-type continuity near the boundary) $\mathbf{u}$ and $\mathbf{b}$ satisfy the following Besov-type continuity near the boundary

\begin{equation}\label{1-3}
\sup_{h>0}\frac{\|\mathbf{u}(x)\cdot\mathbf{n}(m(x))\|_{L^3(\Omega_h)}}{h^{\alpha_1}}\in L^3(0,T),
\end{equation}
\begin{equation}\label{1-4}
\sup_{h>0}\frac{\|\mathbf{b}(x)\cdot\mathbf{n}(m(x))\|_{L^3(\Omega_h)}}{h^{\alpha_2}}\in L^3(0,T),
\end{equation}
\end{enumerate}
then both the total enegy $\mathcal{E}(t)$
and the cross-helicity $\mathcal{H}(t)$ are constants a.e. in $(0,T)$ provided that both the exponents $\alpha_1$ and $\alpha_2$ in \eqref{1-1}, \eqref{1-3} and \eqref{1-4} satisfty $$\alpha_1>\frac13,\quad \alpha_2>\frac{1}{3}.$$
\end{theorem}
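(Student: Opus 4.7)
My plan is to follow the mollification strategy of Constantin--E--Titi, adapted to the bounded-domain setting by Bardos--Titi \cite{Bardos 18} and Drivas--Nguyen \cite{Drivas 18}, cleanly separating interior and boundary contributions via the cut-off $\theta_{h,l}$ with the calibration $l=h/16$. First I mollify the system \eqref{1.1} in space to obtain pointwise equations for $\mathbf{u}^l$, $\mathbf{b}^l$, $\pi^l$ on $\Omega^l$. Since $\operatorname{supp}\theta_{h,l}\subset\Omega^{h-l}\subset\Omega^l$, testing the mollified momentum and induction equations against $\theta_{h,l}\mathbf{u}^l$ and $\theta_{h,l}\mathbf{b}^l$ is rigorously legitimate; summing and integrating by parts using $\nabla\cdot\mathbf{u}^l=\nabla\cdot\mathbf{b}^l=0$ produces an identity for the time derivative of the regularized energy $\frac{1}{2}\int\theta_{h,l}(|\mathbf{u}^l|^2+|\mathbf{b}^l|^2)\,dx$. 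For the cross-helicity I instead pair the momentum equation with $\theta_{h,l}\mathbf{b}^l$ and the induction equation with $\theta_{h,l}\mathbf{u}^l$ and add. In both cases the right-hand side splits into (A) interior commutator defects and (B) boundary-layer terms supported in $\operatorname{supp}\nabla\theta_{h,l}\subset\Omega_h\setminus\Omega_{h-l}$.

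For (A), I use the identity $(\mathbf{f}\otimes\mathbf{g})^l=\mathbf{f}^l\otimes\mathbf{g}^l+R^l(\mathbf{f},\mathbf{g})$. The $\mathbf{f}^l\otimes\mathbf{g}^l$ pieces telescope, after invoking the divergence-free condition, into further boundary terms that are absorbed into (B); the Constantin--E--Titi estimate $\|R^l(\mathbf{f},\mathbf{g})\|_{L^{3/2}}\lesssim l^{\alpha_f+\alpha_g}\|\mathbf{f}\|_{B^{\alpha_f}_{3,\infty}}\|\mathbf{g}\|_{B^{\alpha_g}_{3,\infty}}$ combined with the Bernstein-type bound $\|\nabla\mathbf{u}^l\|_{L^3}\lesssim l^{\alpha_1-1}\|\mathbf{u}\|_{B^{\alpha_1}_{3,\infty}}$ shows that each interior defect decays in $L^1_t$ like $l^{3\min(\alpha_1,\alpha_2)-1}$ on a fixed $\Gamma\subset\subset\Omega$ containing the support of $\theta_{h,l}$ (for $h$ fixed). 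Letting $l\to 0$ with $h$ fixed therefore annihilates all of (A) precisely when $\alpha_1,\alpha_2>1/3$.

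The main obstacle is (B), and this is what the new condition (iii) is tailor-made for. Using $\nabla\theta_{h,l}(x)=-\eta_{h,l}'(d(x))\mathbf{n}(m(x))$ with $|\eta_{h,l}'|\le C/l$, every surviving boundary contribution has the schematic form
\[
\frac{1}{l}\int_{\Omega_h\setminus\Omega_{h-l}} \bigl(\mathbf{v}(x)\cdot\mathbf{n}(m(x))\bigr)\,A(x)\,dx,\qquad \mathbf{v}\in\{\mathbf{u},\mathbf{b}\},
\]
where $A$ is quadratic in $\mathbf{u},\mathbf{b}$ (from the energy flux or the cubic cross-helicity flux) or linear in $\pi$ (from the magnetic-pressure boundary term). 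The $L^\infty$ control of $A$ on $\Omega_{\sigma_0}$ afforded by (ii), together with a spatial Hölder inequality and $|\Omega_h\setminus\Omega_{h-l}|\lesssim l$, gives
\[
\lesssim \frac{1}{l}\,\|\mathbf{v}\cdot\mathbf{n}(m(\cdot))\|_{L^3(\Omega_h)}\,\|A\|_{L^\infty(\Omega_{\sigma_0})}\,l^{2/3} \;\lesssim\; h^{\alpha_j-1/3}\|A\|_{L^\infty(\Omega_{\sigma_0})},
\]
using (iii) and $l=h/16$, which vanishes as $h\to 0$ exactly under $\alpha_1,\alpha_2>1/3$; the time integral over $(0,T)$ is finite by another Hölder pairing the $L^3_t$ (resp.\ $L^{3/2}_t$) integrability of (iii) (resp.\ the pressure term of (ii)). Finally, strong convergence of $\mathbf{u}^l\to\mathbf{u}$, $\mathbf{b}^l\to\mathbf{b}$ in $L^2_{x,t}$ on interior sets and the boundedness near the boundary in (ii) allow me to pass $l\to 0$ and then $h\to 0$ in $\int\theta_{h,l}(|\mathbf{u}^l|^2+|\mathbf{b}^l|^2)\,dx$ and $\int\theta_{h,l}\mathbf{u}^l\cdot\mathbf{b}^l\,dx$, yielding that $\mathcal{E}(t)$ and $\mathcal{H}(t)$ are a.e.\ constant on $(0,T)$.
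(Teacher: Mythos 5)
Your proposal is correct and follows essentially the same route as the paper: Constantin--E--Titi mollification with the cut-off $\theta_{h,l}$ and calibration $l=h/16$, interior commutator estimates from the Besov regularity giving the $l^{3\alpha-1}$ rate, boundary-layer terms of the schematic form $l^{-1}\int_{\Omega_h\setminus\Omega_{h-l}}\bigl(\mathbf{v}\cdot\mathbf{n}(m(x))\bigr)A\,dx$ controlled by conditions (ii)--(iii) with the rate $h^{\alpha_j-1/3}$, and---crucially---summing the momentum and induction equations first so that the smoothed Lorentz and magnetic-stretching fluxes telescope along the divergence-free $\mathbf{b}^l$ into a pure boundary term, which is exactly the paper's $I_{21}+I_{41}$ cancellation that fails term by term. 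One expository caveat: having fixed $l=h/16$ you cannot literally ``let $l\to0$ with $h$ fixed and then $h\to0$'' (your boundary bound $l^{-1/3}h^{\alpha_j}$ would blow up in that first limit); as in the paper, take the single coupled limit $l=h/16\to0$, under which both your interior and boundary estimates vanish simultaneously.
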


\begin{remark}\label{1.2.}
The interior Besov regularity condition in $(i)$ is exactly same as the one in \cite{Caflisch 97} for the 3D ideal MHD equations in periodic domain. The boundedness of $(\mathbf{u},\mathbf{b},\pi)$ in $(ii)$ is motivated by \cite{Drivas 18} for the incompressible Euler equations in bounded domain, and the Besov-type continuity in $(iii)$ is a new condition we proposed in the present paper for the energy and cross-helicity conservation for the 3D ideal MHD equations in bounded domain, which is consistent with
the interior Besov regularity condition in $(i)$.
\end{remark}

\begin{remark}\label{1.3.}
Theorem \ref{1.1.} still holds true if we replace the condition $(iii)$ by the following wall-normal continuity condition $(iii)'$ near the boundary motivated by \cite{Drivas 18} for the incompressible Euler equations in bounded domain:
\begin{equation}\label{1-5}
\begin{split}
(iii)'.\,\,&lim_{h\rightarrow 0}\sup_{x\in \Omega_h}|\mathbf{u}(x,t)\cdot\mathbf{n}(m(x))|=0\,\,\text{in } L^3(0,T),\\
&lim_{h\rightarrow 0}\sup_{x\in \Omega_h}|\mathbf{b}(x,t)\cdot\mathbf{n}(m(x))|=0\,\,\text{in } L^3(0,T),
\end{split}
\end{equation}
Note that the two conditions $(iii)$ and $(iii)'$ are independent each other, that is, the condition $(iii)'$ can not be derived from the condition $(iii)$ and vice versa.
\end{remark}

\begin{remark}\label{1.5.}
If we assume that $\mathbf{u}\in L^3((0,T);C^{\alpha_1}(\overline{\Omega})), \mathbf{b}\in L^3((0,T);C^{\alpha_2}(\overline{\Omega}))$ and $\mathbf{u}\cdot\mathbf{n}\big|_{\partial\Omega}=\mathbf{b}\cdot\mathbf{n}\big|_{\partial\Omega}=0,$ as in Bardos and Titi [1] for the incompressible Euler equations, then
the condition $(i)$, $(ii)$, $(iii)$ are satisfied obviously, thus the energy and cross-helicity conservation hold true.
\end{remark}

\begin{remark}\label{1.6.}
The Besov-type continuity condition $(iii)$ can be viewed as $\mathbf{u}$ and $\mathbf{b}$ satisfy the Dirichlet boundary condition in some weak sense. In particular, if $\mathbf{u},\mathbf{b}\in C(\overline{\Omega})$ additionally, then the condition $(iii)$ implies  $\mathbf{u}\cdot\mathbf{n}\big|_{\partial\Omega}=\mathbf{b}\cdot\mathbf{n}\big|_{\partial\Omega}=0$ in \eqref{1.2}.
\end{remark}

\section{Proof of Theorem \ref{1.1.}}
In this section, we give the detailed proof of our main result Theorem \ref{1.1.}. Hereafter, $C$ is a generic positive constants independent of $l$ and $h$. First we need two useful lemmas.

\begin{lemma}\label{2.2.}
Let $\alpha\in(0,1), k\in \mathbb Z^+$ and $f\in{B}^\alpha_{3,\infty}(\Omega)$, then the following estimates hold true
\begin{equation}\label{2.1300}
\sup_{|y|\leq l}\|f(\cdot)-f(\cdot-y)\|_{L^3(\Omega^{h-l})}\leq Cl^{\alpha}\|f\|_{{B}^\alpha_{3,\infty}(\Omega^{\frac{h}{2}})},
\end{equation}
\begin{equation}\label{2.13}
\|f-f^l\|_{L^3(\Omega^{h-l})}\leq Cl^\alpha\|f\|_{{B}^\alpha_{3,\infty}(\Omega^{\frac{h}{2}})},
\end{equation}
\begin{equation}\label{2.14}
\|D^k f^l\|_{L^3(\Omega^{h-l})}\leq Cl^{\alpha-k}\|f\|_{{B}^\alpha_{3,\infty}(\Omega^{\frac{h}{2}})}.
\end{equation}
\end{lemma}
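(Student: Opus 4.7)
The plan is to prove the three estimates in turn, observing that (1) is essentially the definition of the Besov seminorm after a routine domain check, and that (2) and (3) then follow from (1) by representing the mollification error and the mollified derivatives as superpositions of translation differences.

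For (1), the key point is a domain inclusion: since we take $l=h/16$, for any $x\in\Omega^{h-l}$ and $|y|\leq l$ the translated point $x-y$ lies in $\Omega^{h-2l}\subset\Omega^{h/2}$. Hence $\Omega^{h-l}\subset\Omega^{h/2}\cap(\Omega^{h/2}+\{y\})$, and (1) follows directly from the Besov seminorm definition \eqref{1-19} applied on the enlarged interior subdomain $\Omega^{h/2}$.

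For (2), I would use the convolution representation
\[
f(x)-f^l(x)=\int_{|z|<1}\phi(z)\bigl[f(x)-f(x-lz)\bigr]\,dz,
\]
apply Minkowski's inequality in $L^3(\Omega^{h-l})$, and invoke (1) under the integral with $|y|=|lz|\leq l$. For (3), the standard trick is to exploit the vanishing mean $\int D^k\phi^l(z)\,dz=0$ for $k\geq 1$, which lets one write
\[
D^k f^l(x)=\int D^k\phi^l(z)\bigl[f(x-z)-f(x)\bigr]\,dz.
\]
Minkowski's inequality and (1) then give a factor $Cl^\alpha\|f\|_{B^\alpha_{3,\infty}(\Omega^{h/2})}$, while the scaling $\int|D^k\phi^l|\,dz=Cl^{-k}$ produces the extra factor $l^{-k}$ and yields the claimed bound $Cl^{\alpha-k}$.

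There is no substantive analytic obstacle in any of the three steps — the mechanics are entirely standard Besov/mollifier estimates. The only subtlety worth flagging is the bookkeeping of the nested domains: one has to ensure that all translated or mollified points remain in $\Omega^{h/2}$ so that the Besov seminorm on that slightly larger set controls the right-hand side, and this is exactly what the condition $l<h/4$ (guaranteed by $l=h/16$) secures.
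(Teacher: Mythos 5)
Your proposal is correct and follows essentially the same route as the paper: estimate \eqref{2.1300} via the domain inclusion $\Omega^{h-l}\subset\Omega^{\frac{h}{2}}\cap(\Omega^{\frac{h}{2}}+\{y\})$ (valid since $l=\frac{h}{16}\leq\frac{h}{4}$) together with the Besov seminorm definition, then \eqref{2.13} by Minkowski's inequality applied to the convolution representation of $f-f^l$, and \eqref{2.14} by the vanishing-mean identity $\int_{|y|\leq l}D_y^k\phi^l(y)\,dy=0$ plus the scaling $\int_{|y|\leq l}|D_y^k\phi^l(y)|\,dy\leq Cl^{-k}$. Your explicit flagging of the nested-domain bookkeeping is in fact slightly more careful than the paper's write-up, which leaves that inclusion implicit.
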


\begin{proof}
 For the whole space case $\mathbb R^3$, the estimates \eqref{2.1300}-\eqref{2.14} can be found in \cite{Constantin 94}. Here the proof for bounded domain case is similar. In fact, by the definition of Besov spaces, we have
\begin{equation}\label{000}
\begin{split}
\|f-f^l\|_{L^3(\Omega^{h-l})}
&=\|\int_{|y|\leq l}\phi^l(y)(f(x)-f(x-y))dy\|_{L^3(\Omega^{h-l})}\\
&\leq\sup_{|y|\leq l}\|f(\cdot)-f(\cdot-y)\|_{L^3(\Omega^{h-l})}
\leq\sup_{|y|\leq l}\|f(\cdot)-f(\cdot-y)\|_{L^3(\Omega^{\frac{h}{2}}\cap(\Omega^{\frac{h}{2}}+\{y\}))}\\
&\leq Cl^{\alpha}\|f\|_{{B}^\alpha_{3,\infty}(\Omega^{\frac{h}{2}})},\\
\end{split}
\end{equation}
\begin{equation}\label{002}
\begin{split}
\|D^k f^l\|_{L^3(\Omega^{h-l})}
&=\|\int_{|y|\leq l}D_y^k\phi^l(y)f(x-y)dy\|_{L^3(\Omega^{h-l})}\\
&=\|\int_{|y|\leq l}l^{-k-3}D_y^k \phi(\frac{y}{l})f(x-y)dy\|_{L^3(\Omega^{h-l})}\\
&=\|\int_{|y|\leq l}l^{-k-3}D_y^k \phi(\frac{y}{l})(f(x-y)-f(x))dy\|_{L^3(\Omega^{h-l})}\\
&\leq\sup_{|y|\leq l}\|f(\cdot-y)-f(\cdot)\|_{L^3(\Omega^{h-l})}\int_{|y|\leq l}l^{-k-3}|D_y^k\phi({\frac{y}{l}})|dy\\
&\leq  Cl^{\alpha-k}\|f\|_{{B}^\alpha_{3,\infty}(\Omega^{\frac{h}{2}})},
\end{split}
\end{equation}
where in \eqref{002} we used Minkowski's inequality and the fact $\int_{|y|\leq l}l^{-k-3}D_y^k\phi({\frac{y}{l}})dy=0$.
\end{proof}

\begin{lemma}\label{2.2.0}
Let $\mathbf{w}:\Omega\times[0,T)\rightarrow\mathbb R^3$ be a vector field satisfying
$\mathbf{w}\in L^3(0,T;B_{3,\infty}^{\alpha}(\Omega'))$ for any $\Omega'\subset\subset\Omega$ and
\begin{equation}\label{2-1}
\sup_{h>0}\frac{\|\mathbf{w}(x,t)\cdot\mathbf{n}(m(x))\|_{L^3(\Omega_h)}}{h^{\alpha}}\in L^3(0,T),
\end{equation}
then
\begin{equation}\label{2-2}
\left\|\mathbf{w}^l(x,t)\cdot\nabla\theta_{h,l}\right\|_{L^3\left(0,T; L^3(\Omega_h\setminus\Omega_{h-l})\right)}
\leq Cl^{-1}\left(l^{\alpha}\|\mathbf{w}\|_{L^3(0,T;B_{3,\infty}^\alpha(\Omega^{\frac{h}{2}}))}+h^\alpha\right).
\end{equation}
\end{lemma}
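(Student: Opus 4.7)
The plan is to exploit the fact that the cut-off $\theta_{h,l}$ depends only on the distance function $d(x)$, so $\nabla\theta_{h,l}$ points in the normal direction, and then use the boundary hypothesis \eqref{2-1} together with the mollification estimate \eqref{2.13} from Lemma \ref{2.2.}.

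First, by definition $\theta_{h,l}(x)=\eta_{h,l}(d(x))$, so the chain rule together with \eqref{2.2} gives, for $x\in\Omega_h\setminus\Omega_{h-l}\subset\Omega_{h_0}$,
\begin{equation*}
\nabla\theta_{h,l}(x)=\eta'_{h,l}(d(x))\nabla d(x)=-\eta'_{h,l}(d(x))\,\mathbf{n}(m(x)),
\end{equation*}
with $\|\eta'_{h,l}\|_{L^\infty}\leq C/l$. Therefore
\begin{equation*}
\bigl|\mathbf{w}^l(x,t)\cdot\nabla\theta_{h,l}(x)\bigr|\leq \frac{C}{l}\bigl|\mathbf{w}^l(x,t)\cdot\mathbf{n}(m(x))\bigr|.
\end{equation*}

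Next, I would split the integrand by adding and subtracting $\mathbf{w}$:
\begin{equation*}
\mathbf{w}^l(x,t)\cdot\mathbf{n}(m(x)) = \bigl(\mathbf{w}^l(x,t)-\mathbf{w}(x,t)\bigr)\cdot\mathbf{n}(m(x))+\mathbf{w}(x,t)\cdot\mathbf{n}(m(x)).
\end{equation*}
Since $|\mathbf{n}|=1$ and (up to a set of measure zero) $\Omega_h\setminus\Omega_{h-l}\subset\Omega^{h-l}$, the first term is controlled by \eqref{2.13}:
\begin{equation*}
\bigl\|\mathbf{w}^l-\mathbf{w}\bigr\|_{L^3(\Omega_h\setminus\Omega_{h-l})}\leq C l^{\alpha}\|\mathbf{w}\|_{B^{\alpha}_{3,\infty}(\Omega^{h/2})}.
\end{equation*}
For the second term, $\Omega_h\setminus\Omega_{h-l}\subset\Omega_h$, so
\begin{equation*}
\bigl\|\mathbf{w}(x,t)\cdot\mathbf{n}(m(x))\bigr\|_{L^3(\Omega_h\setminus\Omega_{h-l})}\leq \bigl\|\mathbf{w}(x,t)\cdot\mathbf{n}(m(x))\bigr\|_{L^3(\Omega_h)}\leq h^{\alpha}\,G(t),
\end{equation*}
where $G(t):=\sup_{h>0}h^{-\alpha}\|\mathbf{w}(\cdot,t)\cdot\mathbf{n}(m(\cdot))\|_{L^3(\Omega_h)}$, which by hypothesis \eqref{2-1} belongs to $L^3(0,T)$.

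Combining these spatial bounds and then taking the $L^3(0,T)$ norm in time, Minkowski's inequality (or simply the $L^3$ triangle inequality) yields
\begin{equation*}
\bigl\|\mathbf{w}^l\cdot\nabla\theta_{h,l}\bigr\|_{L^3(0,T;L^3(\Omega_h\setminus\Omega_{h-l}))}\leq C l^{-1}\Bigl(l^{\alpha}\|\mathbf{w}\|_{L^3(0,T;B^{\alpha}_{3,\infty}(\Omega^{h/2}))}+h^{\alpha}\|G\|_{L^3(0,T)}\Bigr),
\end{equation*}
which is exactly \eqref{2-2} after absorbing $\|G\|_{L^3(0,T)}$ into the constant $C$. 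The main conceptual point, rather than any hard obstacle, is to recognize that $\nabla\theta_{h,l}$ isolates precisely the normal component of $\mathbf{w}^l$, so the boundary hypothesis \eqref{2-1} (which controls only the normal component) is sufficient; the rest is a clean interpolation between the interior Besov estimate \eqref{2.13} and the boundary decay assumption.
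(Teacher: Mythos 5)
Your proof is correct and follows essentially the same route as the paper: identify $\nabla\theta_{h,l}=-\eta'_{h,l}(d(x))\mathbf{n}(m(x))$, split the normal component of $\mathbf{w}^l$ into $(\mathbf{w}^l-\mathbf{w})\cdot\mathbf{n}$ plus $\mathbf{w}\cdot\mathbf{n}$, and control the two pieces by the Besov mollification estimate and the hypothesis \eqref{2-1} respectively (the paper merely writes the difference $\mathbf{w}^l-\mathbf{w}$ out as the mollification integral $\int_{|y|\leq l}\phi^l(y)(\mathbf{w}(x-y,t)-\mathbf{w}(x,t))\,dy$ and applies Minkowski's inequality directly, which is the same computation as your citation of \eqref{2.13}). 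Your explicit remarks on the measure-zero discrepancy between $\Omega_h\setminus\Omega_{h-l}$ and $\Omega^{h-l}$, and on absorbing $\|G\|_{L^3(0,T)}$ into the constant, match how the paper implicitly handles these points.
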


\begin{proof}
For any $x\in\Omega_h\setminus\Omega_{h-l}$, by the definition of the cut-off function $\theta_{h,l}(x)$, one has
\begin{equation}\label{2-103}
\nabla\theta_{h,l}(x)=-\eta'_{h,l}\left(d(x)\right)\mathbf{n}\left(m(x)\right).
\end{equation}
By using the Minkowski inequality, we obtain
\begin{equation}\label{2-104}
\begin{array}{ll}
\displaystyle \left\|\mathbf{w}^l(x,t)\cdot\nabla\theta_{h,l}\right\|_{L^3(0,T;L^3(\Omega_h\setminus\Omega_{h-l}))}\\
\displaystyle =\left\|\int_{|y|\leq l}\phi^l(y)\mathbf{w}(x-y,t)\cdot\eta'_{h,l}\left(d(x)\right)\mathbf{n}\left(m(x)\right)dy\right\|_{L^3(0,T;L^3(\Omega_h\setminus\Omega_{h-l}))}\\
\displaystyle  \leq \left\|\eta'_{h,l}\left(d(x)\right)\right\|_{L^\infty(\Omega_h\setminus\Omega_{h-l})}
\Bigg(\|\mathbf{w}(x,t)\cdot\mathbf{n}(m(x))\|_{L^3(0,T;L^3(\Omega_h\setminus\Omega_{h-l}))}\\
\displaystyle  \relphantom{=}+\int_{|y|\leq l}\phi^l(y)\|\left(\mathbf{w}(x-y,t)-\mathbf{w}(x,t)\right)\cdot\mathbf{n}\left(m(x)\right)\|_{L^3(0,T;L^3(\Omega_h\setminus\Omega_{h-l}))} dy\Bigg)\\
\displaystyle  \leq Cl^{-1}\Bigg(\left\|\mathbf{w}(x,t)\cdot\mathbf{n}(m(x))\right\|_{L^3(0,T;L^3(\Omega_h\setminus\Omega_{h-l}))}\\
\displaystyle \relphantom{=}+\sup_{|y|\leq l}\left\|\mathbf{w}(x-y,t)-\mathbf{w}(x,t)\right\|_{L^3(0,T;L^3(\Omega^{\frac{h}{2}}\cap(\Omega^{\frac{h}{2}}+\{y\})))}\Bigg)\\
\displaystyle \leq Cl^{-1}\left(l^{\alpha}\|\mathbf{w}\|_{L^3(0,T;B_{3,\infty}^\alpha(\Omega^{\frac{h}{2}}))}+h^\alpha\right).
\end{array}
\end{equation}
Thus Lemma \ref{2.2.0} is proved.
\end{proof}

Now we are ready to prove Theorem \ref{1.1.} by the following five steps.

\underline{\bf{Step 1}}. Given $x\in\Omega^l$, choosing the test functions $\Phi(y,t)=\left(\phi^l(x-y)\chi(t),0,0\right)^{t}$,\\
                                                  $\left(0,\phi^l(x-y)\chi(t),0\right)^{t}$,
                                           and $\left(0,0,\phi^l(x-y)\chi(t)\right)^{t}$ with $\chi(t)\in C_0^\infty\left((0,T)\right)$ in \eqref{2.7} and \eqref{2.8} respectively,
we obtain for $i=1,2,3,$
\begin{equation}\label{3-101}
\begin{split}
&\int_0^t\int_{\Omega}u_i\phi^l(x-y)\chi'(t)dydt+\int_0^t\int_{\Omega}u_iu_j\left(\partial_{y_j}\phi^l(x-y)\right)\chi(t)dydt\\
&-\int_0^t\int_{\Omega}b_ib_j\left(\partial_{y_j}\phi^l(x-y)\right)\chi(t)dydt
+\int_0^t\int_{\Omega}\pi\left(\partial_{y_i}\phi^l(x-y)\right)\chi(t)dydt=0,
\end{split}
\end{equation}
and
\begin{equation}\label{3-1001}
\begin{split}
&\int_0^t\int_{\Omega}b_i\phi^l(x-y)\chi'(t)dydt+\int_0^t\int_{\Omega}b_iu_j\left(\partial_{y_j}\phi^l(x-y)\right)\chi(t)dydt\\
&-\int_0^t\int_{\Omega}u_ib_j\left(\partial_{y_j}\phi^l(x-y)\right)\chi(t)dydt=0.
\end{split}
\end{equation}
By the definition of mollification, it is easy to get
\begin{equation}\label{3-10-1}
\int_0^t\mathbf{u}^l\chi'(t)dt-\int_0^tdiv(\mathbf{u}\otimes\mathbf{u})^l\chi(t)dt+\int_0^tdiv(\mathbf{b}\otimes\mathbf{b})^l \chi(t)dt
-\int_0^t\nabla\pi^l\chi(t)dt=0,
\end{equation}
and
\begin{equation}\label{3-10-2}
\int_0^t\mathbf{b}^l\chi'(t)dt-\int_0^tdiv(\mathbf{b}\otimes\mathbf{u})^l\chi(t)dt+\int_0^tdiv(\mathbf{u}\otimes\mathbf{b})^l \chi(t)dt=0.
\end{equation}
Multiplying the equality \eqref{3-10-1} by $\theta_{h,l}\mathbf{u}^l$ and \eqref{3-10-2} by $\theta_{h,l}\mathbf{b}^l$, and then summing the resulted equalities together and integrating over $\Omega^l$ with respect to $x$, we derive
\begin{equation}\label{3-102}
\int_0^t\left(\int_{\Omega^l}\theta_{h,l}\left|\mathbf{u}^l\right|^2dx\right)\chi'(t)dt
+\int_0^t\left(\int_{\Omega^l}\theta_{h,l}\left|\mathbf{b}^l\right|^2dx\right)\chi'(t)dt
+\int_0^t I(t)\chi(t)dt=0,
\end{equation}
where
\begin{equation}\label{3.10}
\begin{split}
I(t)&=-\int_{\Omega^l}div(\mathbf{u}\otimes\mathbf{u})^l\cdot\left(\theta_{h,l}\mathbf{u}^l\right)dx
+\int_{\Omega^l}div(\mathbf{b}\otimes\mathbf{b})^l\cdot\left(\theta_{h,l}\mathbf{u}^l\right)dx\\
&\relphantom{=}-\int_{\Omega^l}div(\mathbf{b}\otimes\mathbf{u})^l\cdot\left(\theta_{h,l}\mathbf{b}^l\right)dx
+\int_{\Omega^l}div(\mathbf{u}\otimes\mathbf{b})^l\cdot\left(\theta_{h,l}\mathbf{b}^l\right)dx
-\int_{\Omega^l}\nabla\pi^l\cdot\left(\theta_{h,l}\mathbf{u}^l\right) dx\\
&:=I_1(t)+I_2(t)+I_3(t)+I_4(t)+I_5(t).
\end{split}
\end{equation}

First we claim that
\begin{equation}\label{3.11}
\int_{\Omega^l}\theta_{h,l}\left|\mathbf{u}^l\right|^2dx \rightarrow  \int_{\Omega}|\mathbf{u}|^2dx, \quad\quad \int_{\Omega^l}\theta_{h,l}\left|\mathbf{b}^l\right|^2dx \rightarrow  \int_{\Omega}|\mathbf{b}|^2dx,
\end{equation}
in $L^1(0,T)$ as $l\rightarrow 0$. In fact,
\begin{equation}\label{3.12}
\begin{split}
&\int_0^T\left|\int_{\Omega^l}\theta_{h,l}\left|\mathbf{u}^l\right|^2dx-\int_{\Omega}|\mathbf{u}|^2dx\right|dt\\
&\leq\int_0^T\left|\int_{\Omega^l}\theta_{h,l}\left|\mathbf{u}^l\right|^2dx-\int_{\Omega^l}\theta_{h,l}|\mathbf{u}|^2dx\right|dt
+\int_0^T\left|\int_{\Omega^l}\theta_{h,l}|\mathbf{u}|^2dx-\int_{\Omega^l}|\mathbf{u}|^2dx\right|dt\\
&\relphantom{=}+\int_0^T\left|\int_{\Omega^l}|\mathbf{u}|^2dx-\int_{\Omega}|\mathbf{u}|^2dx\right|dt\\
&:=J_1+J_2+J_3.
\end{split}
\end{equation}
For $J_1(t)$, we have
\begin{equation}\label{3-1-1}
\begin{split}
J_1=&\int_0^T\left|\int_{\Omega^l}\theta_{h,l}\left|\mathbf{u}^l\right|^2dx-\int_{\Omega^l}\theta_{h,l}|\mathbf{u}|^2dx\right|dt\\
\leq& \int_0^T\int_{\Omega^{h-l}}\left|\left|\mathbf{u}^l\right|^2-|\mathbf{u}|^2\right|dxdt\\
\leq& \left(\int_0^T\int_{\Omega^{h/2}}\left|\mathbf{u}^l+\mathbf{u}\right|^2dxdt\right)^{1/2} \left(\int_0^T\int_{\Omega^{h/2}}\left|\mathbf{u}^l-\mathbf{u}\right|^2dxdt\right)^{1/2}.
\end{split}
\end{equation}
On the one hand, we have
\begin{equation}\label{3-1-2}
\begin{split}
\left(\int_0^T\int_{\Omega^{h/2}}\left|\mathbf{u}^l+\mathbf{u}\right|^2dxdt\right)^{1/2}
\leq&\left(\int_0^T\int_{\Omega^{h/2}}2\left(\left|\mathbf{u}^l\right|^2+|\mathbf{u}|^2\right)dxdt\right)^{1/2}\\
\leq& \left(\int_0^T\int_{\Omega}4|\mathbf{u}|^2dxdt\right)^{1/2}
<\infty.
\end{split}
\end{equation}
On the other hand, by using the dominant convergence theorem, we have
\begin{equation}\label{3-1-3}
\left(\int_0^T\int_{\Omega^{h/2}}\left|\mathbf{u}^l-\mathbf{u}\right|^2dxdt\right)^{1/2}\rightarrow 0, \,\,{\rm as}\, l\rightarrow 0.
\end{equation}
Substituting \eqref{3-1-2} and \eqref{3-1-3} into \eqref{3-1-1}, one has
\[J_1\rightarrow 0,\,\,{\rm as}\,l\rightarrow 0.\]
For $J_2(t)$, recalling that $l=\frac{h}{16}$, we have
\begin{equation}\label{3-1-4}
\begin{split}
\int_0^T\left|\int_{\Omega^l}\theta_{h,l}|\mathbf{u}|^2dx-\int_{\Omega^l}|\mathbf{u}|^2dx\right|dt
=& \int_0^T\left|\int_{\Omega_h}(\theta_{h,l}-1)|\mathbf{u}|^2dx\right|dt\\
\leq& 2\int_0^T\int_{\Omega_h}|\mathbf{u}|^2dxdt\rightarrow 0, \,\, {\rm as} \,\ l\rightarrow 0.
\end{split}
\end{equation}
For $J_3(t)$, we have
\begin{equation}\label{3-1-5}
\int_0^T\left|\int_{\Omega^l}|\mathbf{u}|^2dx-\int_{\Omega}|\mathbf{u}|^2dx\right|dt
=\int_0^T\int_{\Omega_l}|\mathbf{u}|^2dxdt\rightarrow 0,\,\, {\rm as} \,\ l\rightarrow 0.
\end{equation}

Therefore, to conclude the proof of Theorem \ref{1.1.}, it is sufficient to prove that
\begin{equation}\label{3-1-50}
I(t)\rightarrow 0\,\,{\rm in}\,L^1(0,T), \,\,{\rm as} \,\ l\rightarrow 0.
\end{equation}

\underline{\bf{Step 2}}. In this step, we deal with
$I_1(t)=-\int_{\Omega^l}div(\mathbf{u}\otimes\mathbf{u})^l\cdot\left(\theta_{h,l}\mathbf{u}^l\right)dx$.
Using the divergence-free property of $\mathbf{u}^l$, we can decompose the nonlinear term $I_1(t)$ into two parts
\begin{equation}\label{3-1-6}
\begin{split}
I_1(t)=&-\int_{\Omega^l}div(\mathbf{u}\otimes\mathbf{u})^l\cdot\left(\theta_{h,l}\mathbf{u}^l\right)dx
=\int_{\Omega^l}(\mathbf{u}\otimes\mathbf{u})^l:\nabla\left(\theta_{h,l}\mathbf{u}^l\right)dx\\
=&\int_{\Omega^l}\left(\mathbf{u}^l\otimes\mathbf{u}^l\right):\nabla\left(\theta_{h,l}\mathbf{u}^l\right)dx
+\int_{\Omega^l}\left((\mathbf{u}\otimes\mathbf{u})^l-\left(\mathbf{u}^l\otimes\mathbf{u}^l\right)\right):\nabla\left(\theta_{h,l}\mathbf{u}^l\right)dx\\
:=&I_{11}(t)+I_{12}(t).
\end{split}
\end{equation}

For $I_{11}(t)$, integrating by parts, we have
\begin{equation}\label{3-1-7}
\begin{split}
I_{11}(t)&=\int_{\Omega^l}\left(\mathbf{u}^l\otimes\mathbf{u}^l\right):\nabla\left(\theta_{h,l}\mathbf{u}^l\right)dx
 =\int_{\Omega^l}u_i^lu_j^l\partial_{x_j} \left (\theta_{h,l}u_i^l\right)dx\\
 &=\int_{\Omega^l}\left(u_i^l\right)^2 u_j^l(\partial_{x_j} \theta_{h,l})dx+\int_{\Omega^l}\theta_{h,l}u_j^l\partial_{x_j}\frac{\left(u_i^l\right)^2}{2}dx\\
 &=\int_{\Omega^l}\left(u_i^l\right)^2 u_j^l(\partial_{x_j} \theta_{h,l})dx-\int_{\Omega^l}(\partial_{x_j}\theta_{h,l})u_j^l\frac{\left(u_i^l\right)^2}{2}dx
 -\int_{\Omega^l}\theta_{h,l}\left(\partial_{x_j}u_j^l\right)\frac{\left(u_i^l\right)^2}{2}dx\\
 &=\int_{\Omega^l}\left(\partial_{x_j}\theta_{h,l}\right)u_j^l\frac{\left(u_i^l\right)^2}{2}dx\\
&=\int_{\Omega_h\setminus\Omega_{h-l}}\left(\mathbf{u}^l\cdot\nabla\theta_{h,l}\right)\frac{\left|\mathbf{u}^l\right|^2}{2} dx.
\end{split}
\end{equation}

For $I_{12}(t)$, we have
\begin{equation}\label{3-1-8}
\begin{array}{ll}
\displaystyle I_{12}(t)
\displaystyle=\int_{\Omega_h\setminus\Omega_{h-l}}
\int_{|y|\leq l}\phi^l(y)\big(\mathbf{u}(x-y,t)-\mathbf{u}(x,t)\big)\otimes\left(\mathbf{u}(x-y,t)-\mathbf{u}(x,t)\right)dy:\big(\mathbf{u}^l\otimes\nabla\theta_{h,l}\big) dx\\[5mm]
\displaystyle \qquad\quad +\int_{\Omega^{h-l}}\int_{|y|\leq l}\phi^l(y)\big(\mathbf{u}(x-y,t)-\mathbf{u}(x,t)\big)\otimes\big(\mathbf{u}(x-y,t)-\mathbf{u}(x,t)\big)dy:\big(\theta_{h,l}\nabla\mathbf{u}^l\big)  dx\\[5mm]
\displaystyle\qquad\quad -\int_{\Omega_h\setminus\Omega_{h-l}}\big(\mathbf{u}(x,t)-\mathbf{u}^l(x,t)\big)\otimes\big(\mathbf{u}(x,t)-\mathbf{u}^l(x,t)\big):
\big(\mathbf{u}^l\otimes\nabla\theta_{h,l}\big) dx\\[5mm]
\displaystyle\qquad\quad -\int_{\Omega^{h-l}}\big(\mathbf{u}(x,t)-\mathbf{u}^l(x,t)\big)\otimes\big(\mathbf{u}(x,t)-\mathbf{u}^l(x,t)\big):\big(\theta_{h,l}\nabla\mathbf{u}^l\big) dx\\[4mm]
\displaystyle\qquad\,\,:=R_{11}(t)+R_{12}(t)+R_{13}(t)+R_{14}(t),
\end{array}
\end{equation}
where we have used the following commutator equality
\begin{equation}\label{3-1-9}
\begin{split}
(fg)^l(x,t)=f^l(x,t) g^l(x,t)&+\int_{|y|\leq l}\phi^l(y)\left(f(x-y,t)-f(x,t)\right)\left(g(x-y,t)-g(x,t)\right) dy\\
&-\left(f(x,t)-f^l(x,t)\right)\left(g(x,t)-g^l(x,t)\right).
\end{split}
\end{equation}
Using the fact that $supp\,\theta_{h,l}\subset\Omega^{h-l}$ and $supp\,\nabla\theta_{h,l}\subset\Omega_h\setminus\Omega_{h-l}$, we then divide the terms in the right hand of \eqref{3-1-8} into two parts: the interior terms containing the cut-off function $\theta_{h,l}$ and the boundary terms containing the gradient of the cut-off function $\nabla\theta_{h,l}$. In fact, $R_{12}(t)$ and $R_{14}(t)$ are the interior terms, while $I_{11}(t)$,
$R_{11}(t)$ and $R_{13}(t)$ are the boundary terms. We then estimate them one by one.

For the interior term $R_{12}(t)$, by using \eqref{2.13} and Fubini's theorem, we have
\begin{equation}\label{3-1-10}
\begin{array}{ll}
\displaystyle\quad \int_0^T|R_{12}(t)|dt\\
\displaystyle=\int_0^T\Bigg|\int_{\Omega^{h-l}}\Big(\int_{|y|\leq l}\phi^l(y)(\mathbf{u}(x-y,t)-\mathbf{u}(x,t))\otimes(\mathbf{u}(x-y,t)-\mathbf{u}(x,t))dy\Big)
:\left(\theta_{h,l}\nabla\mathbf{u}^l\right)  dx\Bigg|dt\\[5mm]
\displaystyle\leq C\sup_{|y|\leq l}\|\mathbf{u}(x-y,t)-\mathbf{u}(x,t)\|_{L^3(0,T;L^3(\Omega^{h-l}))}^2
\left\|\nabla\mathbf{u}^l\right\|_{L^3(0,T;L^3(\Omega^{h-l}))}\\
\displaystyle\leq Cl^{3\alpha_1-1}\left\|\mathbf{u}\right\|_{L^3(0,T;B_{3,\infty}^{\alpha_1}(\Omega^{\frac{h}{2}}))}^3.
\end{array}
\end{equation}

Similarly,
\begin{equation}\label{3-1-11}
\begin{split}
\int_0^T|R_{14}(t)|dt&=\int_0^T\left|\int_{\Omega^{h-l}}\left(\mathbf{u}(x,t)-\mathbf{u}^l(x,t)\right)\otimes\left(\mathbf{u}(x,t)-\mathbf{u}^l(x,t)\right)
:\left(\theta_{h,l}\nabla\mathbf{u}^l\right) dx\right|dt\\
&\leq C\left\|\mathbf{u}(x,t)-\mathbf{u}^l(x,t)\right\|_{L^3(0,T;L^3(\Omega^{h-l}))}^2 \left\|\nabla\mathbf{u}^l\right\|_{L^3(0,T;L^3(\Omega^{h-l}))}\\
&\leq Cl^{3\alpha_1-1}\|\mathbf{u}\|_{L^3(0,T;B_{3,\infty}^{\alpha_1}(\Omega^{\frac{h}{2}}))}^3.
\end{split}
\end{equation}

Then we estimate the boundary terms $I_{11}(t)$, $R_{11}(t)$ and $R_{13}(t)$. Using Lemma \ref{2.2.0} and the H\"{o}lder inequality,
\begin{equation}\label{3-1-12}
\begin{split}
&\int_0^T|I_{11}(t)|dt\\
=&\int_0^T\left|\int_{\Omega_h\setminus\Omega_{h-l}}\left(\mathbf{u}^l\cdot\nabla\theta_{h,l}\right)\frac{\left|\mathbf{u}^l\right|^2}{2} dx\right|dt\\
    \leq& C\left\|\mathbf{u}^l(x,t)\cdot\nabla\theta_{h,l}\right\|_{L^3(0,T;L^3(\Omega_h\setminus\Omega_{h-l}))}
    \left\|\mathbf{u}^l\right\|_{L^3(0,T;L^\infty(\Omega_h\setminus\Omega_{h-l}))}^2
      \left|\Omega_h\setminus\Omega_{h-l}\right|^{1-\frac{1}{3}}\\
    \leq& Cl^{-\frac{1}{3}}\left(l^{\alpha_1}\|\mathbf{u}\|_{L^3(0,T;B_{3,\infty}^{\alpha_1}(\Omega^{\frac{h}{2}}))}+h^{\alpha_1}\right)
    \|\mathbf{u}\|_{L^3(0,T;L^\infty(\Omega_{2h}))}^2.
\end{split}
\end{equation}
Similarly,
\begin{equation}\label{3-1-13}
\begin{array}{ll}
\displaystyle\quad \int_0^T|R_{11}(t)|dt\\
\displaystyle=\int_0^T\Bigg|\int_{\Omega_h\setminus\Omega_{h-l}}
\Big(\int_{|y|\leq l}\phi^l(y)(\mathbf{u}(x-y,t)-\mathbf{u}(x,t))\otimes(\mathbf{u}(x-y,t)-\mathbf{u}(x,t))dy\Big)\\
\displaystyle\qquad\quad:\left(\mathbf{u}^l\otimes\nabla\theta_{h,l}\right) dx\Bigg|dt\\[5mm]
\displaystyle\leq Cl^{-1} \sup_{|y|\leq l}\left\|\mathbf{u}(x-y,t)-\mathbf{u}(x,t)\right\|_{L^3(0,T;L^\infty(\Omega_h\setminus\Omega_{h-l}))}
\left|\Omega_h\setminus\Omega_{h-l}\right|^{1-\frac{1}{3}}\\
\displaystyle \qquad\quad\sup_{|y|\leq l}\left\|\mathbf{u}(x-y,t)-\mathbf{u}(x,t)\right\|_{L^3(0,T;L^3(\Omega_h\setminus\Omega_{h-l}))}
\left\|\mathbf{u}^l\right\|_{L^3(0,T;L^\infty(\Omega_h\setminus\Omega_{h-l}))}\\
\displaystyle\leq Cl^{\alpha_1-\frac{1}{s}}\|\mathbf{u}\|_{L^3(0,T;B_{3,\infty}^{\alpha_1}(\Omega^{\frac{h}{2}}))}\|\mathbf{u}\|_{L^3(0,T;L^\infty(\Omega_{2h}))}^2,
\end{array}
\end{equation}
and
\begin{equation}\label{3-1-14}
\begin{split}
&\int_0^T|R_{13}(t)|dt\\
=&\int_0^T\left|\int_{\Omega_h\setminus\Omega_{h-l}}
(\mathbf{u}(x,t)-\mathbf{u}^l(x,t))\otimes(\mathbf{u}(x,t)-\mathbf{u}^l(x,t))
:(\mathbf{u}^l\otimes\nabla\theta_{h,l}) dx\right| dt\\
\leq& Cl^{-1}\left\|\mathbf{u}(x,t)-\mathbf{u}^l(x,t)\right\|_{L^3(0,T;L^\infty(\Omega_h\setminus\Omega_{h-l}))}
\left|\Omega_h\setminus\Omega_{h-l}\right|^{1-\frac{1}{3}}\\
&\left\|\mathbf{u}(x,t)-\mathbf{u}^l(x,t)\right\|_{L^3(0,T;L^3(\Omega_h\setminus\Omega_{h-l}))}
\left\|\mathbf{u}^l\right\|_{L^3(0,T;L^\infty(\Omega_h\setminus\Omega_{h-l}))}\\
\leq& Cl^{\alpha_1-\frac{1}{3}}\|\mathbf{u}\|_{L^3(0,T;B_{3,\infty}^{\alpha_1}(\Omega^{\frac{h}{2}}))}\|\mathbf{u}\|_{L^3(0,T;L^\infty(\Omega_{2h}))}^2.
\end{split}
\end{equation}
Combining the estimates \eqref{3-1-10}-\eqref{3-1-14} together, if $\alpha_1>\frac{1}{3}$, we obtain
\begin{equation}\label{3-1-15}
\int_0^T |I_1(t)| dt\rightarrow 0,\,\,as\,l\rightarrow 0.
\end{equation}

\underline{\bf{Step 3}}. We then deal with the term $I_3(t)$.
\begin{equation}\label{3-1-16}
\begin{split}
I_3(t)=&-\int_{\Omega^l}div(\mathbf{b}\otimes\mathbf{u})^l\cdot\left(\theta_{h,l}\mathbf{b}^l\right)dx
=\int_{\Omega^l}(\mathbf{b}\otimes\mathbf{u})^l:\nabla\left(\theta_{h,l}\mathbf{b}^l\right)dx\\
=&\int_{\Omega^l}\left(\mathbf{b}^l\otimes\mathbf{u}^l\right):\nabla\left(\theta_{h,l}\mathbf{b}^l\right)dx
+\int_{\Omega^l}\left((\mathbf{b}\otimes\mathbf{u})^l-\left(\mathbf{b}^l\otimes\mathbf{u}^l\right)\right):\nabla\left(\theta_{h,l}\mathbf{b}^l\right)dx\\
:=&I_{31}(t)+I_{32}(t).
\end{split}
\end{equation}
Similar as Step 2, we have
\begin{equation}\label{3-1-17}
I_{31}(t)=-\int_{\Omega_h\setminus\Omega_{h-l}}\left(\mathbf{u}^l\cdot\nabla\theta_{h,l}\right)\frac{\left|\mathbf{b}^l\right|^2}{2} dx,
\end{equation}
and
\begin{equation}\label{3-1-18}
\begin{array}{ll}
\displaystyle I_{32}(t)
\displaystyle =\int_{\Omega_h\setminus\Omega_{h-l}}
\int_{|y|\leq l}\phi^l(y)\big(\mathbf{b}(x-y,t)-\mathbf{b}(x,t)\big)\otimes\left(\mathbf{u}(x-y,t)-\mathbf{u}(x,t)\right)dy:\big(\mathbf{b}^l\otimes\nabla\theta_{h,l}\big) dx\\[5mm]
\displaystyle \qquad\quad +\int_{\Omega^{h-l}}\int_{|y|\leq l}\phi^l(y)\big(\mathbf{b}(x-y,t)-\mathbf{b}(x,t)\big)\otimes\big(\mathbf{u}(x-y,t)-\mathbf{u}(x,t)\big)dy:\big(\theta_{h,l}\nabla\mathbf{b}^l\big)  dx\\[5mm]
\displaystyle \qquad\quad -\int_{\Omega_h\setminus\Omega_{h-l}}\big(\mathbf{b}(x,t)-\mathbf{b}^l(x,t)\big)\otimes\big(\mathbf{u}(x,t)-\mathbf{u}^l(x,t)\big):
\big(\mathbf{b}^l\otimes\nabla\theta_{h,l}\big) dx\\[5mm]
\displaystyle \qquad\quad -\int_{\Omega^{h-l}}\big(\mathbf{b}(x,t)-\mathbf{b}^l(x,t)\big)\otimes\big(\mathbf{u}(x,t)-\mathbf{u}^l(x,t)\big):\big(\theta_{h,l}\nabla\mathbf{b}^l\big) dx.\\[4mm]
\end{array}
\end{equation}

Due to the fact that the fluid velocity $\mathbf{u}$ and the magnetic field $\mathbf{b}$ are required to satisfy similar conditions in Theorem \ref{1.1.}, we have the following estimates
\begin{equation}\label{3-1-19}
\int_0^T|I_{31}(t)|dt
    \leq Cl^{-\frac{1}{3}}\left(l^{\alpha_1}\|\mathbf{u}\|_{L^3(0,T;B_{3,\infty}^{\alpha_1}(\Omega^{\frac{h}{2}}))}+h^{\alpha_1}\right)
    \|\mathbf{b}\|_{L^3(0,T;L^\infty(\Omega_{2h}))}^2,
\end{equation}
and
\begin{equation}\label{3-1-20}
\begin{split}
\int_0^T|I_{32}(t)|dt
\leq& C \left[l^{\alpha_1+2\alpha_2-1}\|\mathbf{u}\|_{L^3(0,T;B_{3,\infty}^{\alpha_1}(\Omega^{\frac{h}{2}}))}
\|\mathbf{b}\|_{L^3(0,T;B_{3,\infty}^{\alpha_2}(\Omega^{\frac{h}{2}}))}^2\right.\\
&+\left. l^{-\frac{1}{3}}\left(l^{\alpha_1}\|\mathbf{u}\|_{L^3(0,T;B_{3,\infty}^{\alpha_1}(\Omega^{\frac{h}{2}}))}+h^{\alpha_1}\right)
\|\mathbf{b}\|_{L^3(0,T;L^\infty(\Omega_{2h}))}^2\right].
\end{split}
\end{equation}
Thus, from the estimates \eqref{3-1-19} and \eqref{3-1-20}, if $\alpha_1,\,\alpha_2>\frac{1}{3}$, we have
\begin{equation}\label{3-1-21}
\int_0^T |I_3(t)| dt\rightarrow 0,\,\,as\, l\rightarrow 0.
\end{equation}

\underline{\bf{Step 4}}. For the terms $I_2(t)$ and $I_4(t)$, we can not prove that $I_2(t)$ or $I_4(t)$ vanishes seperately. Fortunately, we are able to show $I_2(t)+I_4(t)\rightarrow0$ in $L^1(0,T)$, which is also crucial in our proof.

 For $I_2(t)$, we have
\begin{equation}\label{3-1-22}
\begin{split}
I_2(t)=&\int_{\Omega^l}div(\mathbf{b}\otimes\mathbf{b})^l\cdot\left(\theta_{h,l}\mathbf{u}^l\right)dx
=-\int_{\Omega^l}(\mathbf{b}\otimes\mathbf{b})^l:\nabla\left(\theta_{h,l}\mathbf{u}^l\right)dx\\
=&-\int_{\Omega^l}\left(\mathbf{b}^l\otimes\mathbf{b}^l\right):\nabla\left(\theta_{h,l}\mathbf{u}^l\right)dx
-\int_{\Omega^l}\left((\mathbf{b}\otimes\mathbf{b})^l-\left(\mathbf{b}^l\otimes\mathbf{b}^l\right)\right):\nabla\left(\theta_{h,l}\mathbf{u}^l\right)dx\\
:=&I_{21}(t)+I_{22}(t).
\end{split}
\end{equation}
After some calculations, we have
\begin{equation}\label{3-1-23}
I_{21}(t)
 =-\int_{\Omega_h\setminus\Omega_{h-l}}\left(\mathbf{b}^l\cdot\mathbf{u}^l\right)\left(\mathbf{b}^l\cdot\nabla\theta_{h,l}\right)dx
 -\int_{\Omega^{h-l}}\left(\mathbf{b}^l\otimes\mathbf{b}^l\right):\left(\theta_{h,l}\nabla\mathbf{u}^l\right)dx,
\end{equation}
and
\begin{equation}\label{3-1-24}
\begin{array}{ll}
\displaystyle I_{22}(t)
\displaystyle =-\int_{\Omega_h\setminus\Omega_{h-l}}
\int_{|y|\leq l}\phi^l(y)\big(\mathbf{b}(x-y,t)-\mathbf{b}(x,t)\big)\otimes\left(\mathbf{b}(x-y,t)-\mathbf{b}(x,t)\right)dy:\big(\mathbf{u}^l\otimes\nabla\theta_{h,l}\big) dx\\[5mm]
\displaystyle \qquad\qquad -\int_{\Omega^{h-l}}\int_{|y|\leq l}\phi^l(y)\big(\mathbf{b}(x-y,t)-\mathbf{b}(x,t)\big)\otimes\big(\mathbf{b}(x-y,t)-\mathbf{b}(x,t)\big)dy:\big(\theta_{h,l}\nabla\mathbf{u}^l\big)  dx\\[5mm]
\displaystyle \qquad\qquad +\int_{\Omega_h\setminus\Omega_{h-l}}\big(\mathbf{b}(x,t)-\mathbf{b}^l(x,t)\big)\otimes\big(\mathbf{b}(x,t)-\mathbf{b}^l(x,t)\big):
\big(\mathbf{u}^l\otimes\nabla\theta_{h,l}\big) dx\\[5mm]
\displaystyle \qquad\qquad +\int_{\Omega^{h-l}}\big(\mathbf{b}(x,t)-\mathbf{b}^l(x,t)\big)\otimes\big(\mathbf{b}(x,t)-\mathbf{b}^l(x,t)\big):\big(\theta_{h,l}\nabla\mathbf{u}^l\big) dx.\\[4mm]
\end{array}
\end{equation}
For $I_4(t)$, we have
\begin{equation}\label{3-1-25}
\begin{split}
I_4(t)=&\int_{\Omega^l}div(\mathbf{u}\otimes\mathbf{b})^l\cdot\left(\theta_{h,l}\mathbf{b}^l\right)dx
=-\int_{\Omega^l}(\mathbf{u}\otimes\mathbf{b})^l:\nabla\left(\theta_{h,l}\mathbf{b}^l\right)dx\\
=&-\int_{\Omega^l}\left(\mathbf{u}^l\otimes\mathbf{b}^l\right):\nabla\left(\theta_{h,l}\mathbf{b}^l\right)dx
-\int_{\Omega^l}\left((\mathbf{u}\otimes\mathbf{b})^l-\left(\mathbf{u}^l\otimes\mathbf{b}^l\right)\right):\nabla\left(\theta_{h,l}\mathbf{b}^l\right)dx\\
:=&I_{41}(t)+I_{42}(t).
\end{split}
\end{equation}

Similarly, we have
\begin{equation}\label{3-1-26}
I_{41}(t)=-\int_{\Omega_h\setminus\Omega_{h-l}}\left(\mathbf{b}^l\cdot\mathbf{u}^l\right)\left(\mathbf{b}^l\cdot\nabla\theta_{h,l}\right)dx
 -\int_{\Omega^{h-l}}\left(\mathbf{u}^l\otimes\mathbf{b}^l\right):\left(\theta_{h,l}\nabla\mathbf{b}^l\right)dx,
\end{equation}
and
\begin{equation}\label{3-1-27}
\begin{array}{ll}
\displaystyle I_{42}(t)
\displaystyle =-\int_{\Omega_h\setminus\Omega_{h-l}}
\int_{|y|\leq l}\phi^l(y)\big(\mathbf{u}(x-y,t)-\mathbf{u}(x,t)\big)\otimes\left(\mathbf{b}(x-y,t)-\mathbf{b}(x,t)\right)dy:\big(\mathbf{b}^l\otimes\nabla\theta_{h,l}\big) dx\\[5mm]
\displaystyle \qquad\qquad-\int_{\Omega^{h-l}}\int_{|y|\leq l}\phi^l(y)\big(\mathbf{u}(x-y,t)-\mathbf{u}(x,t)\big)\otimes\big(\mathbf{b}(x-y,t)-\mathbf{b}(x,t)\big)dy:\big(\theta_{h,l}\nabla\mathbf{b}^l\big)  dx\\[5mm]
\displaystyle \qquad\qquad+\int_{\Omega_h\setminus\Omega_{h-l}}\big(\mathbf{u}(x,t)-\mathbf{u}^l(x,t)\big)\otimes\big(\mathbf{b}(x,t)-\mathbf{b}^l(x,t)\big):
\big(\mathbf{b}^l\otimes\nabla\theta_{h,l}\big) dx\\[5mm]
\displaystyle\qquad\qquad +\int_{\Omega^{h-l}}\big(\mathbf{u}(x,t)-\mathbf{u}^l(x,t)\big)\otimes\big(\mathbf{b}(x,t)-\mathbf{b}^l(x,t)\big):\big(\theta_{h,l}\nabla\mathbf{b}^l\big) dx.\\[4mm]
\end{array}
\end{equation}

As mentioned before, due to the terms $\int_{\Omega^{h-l}}\left(\mathbf{b}^l\otimes\mathbf{b}^l\right):\left(\theta_{h,l}\nabla\mathbf{u}^l\right)dx$ in \eqref{3-1-23} and $\int_{\Omega^{h-l}}\left(\mathbf{u}^l\otimes\mathbf{b}^l\right):\left(\theta_{h,l}\nabla\mathbf{b}^l\right)dx$ in \eqref{3-1-26}, we can not prove that  $I_{21}(t)$ or $I_{41}(t)$ vanishes as $l\rightarrow 0$ seperately.
However,
adding them together, we obtain
\begin{equation}\label{3-1-28}
\begin{split}
&I_{21}(t)+I_{41}(t)\\
=&-\int_{\Omega^l}\left(\mathbf{b}^l\otimes\mathbf{b}^l\right):\nabla\left(\theta_{h,l}\mathbf{u}^l\right)dx
-\int_{\Omega^l}\left(\mathbf{u}^l\otimes\mathbf{b}^l\right):\nabla\left(\theta_{h,l}\mathbf{b}^l\right)dx\\
=&-\int_{\Omega^l}b_i^lb_j^l(\partial_{x_j}\theta_{h,l})u_i^ldx
-\int_{\Omega^l}\theta_{h,l}b_i^lb_j^l\left(\partial_{x_j}u_i^l\right)dx\\
&-\int_{\Omega^l}u_i^lb_j^l(\partial_{x_j}\theta_{h,l})b_i^ldx
-\int_{\Omega^l}\theta_{h,l}u_i^lb_j^l\left(\partial_{x_j}b_i^l\right)dx\\
=& -2\int_{\Omega^l}b_i^lb_j^l(\partial_{x_j}\theta_{h,l})u_i^ldx
-\int_{\Omega^l}\theta_{h,l}b_j^l\partial_{x_j}\left(u_i^lb_i^l\right)dx\\
=& -2\int_{\Omega^l}b_i^lb_j^l(\partial_{x_j}\theta_{h,l})u_i^ldx
+\int_{\Omega^l}(\partial_{x_j}\theta_{h,l})b_j^lu_i^lb_i^ldx\\
=& -\int_{\Omega_h\setminus\Omega_{h-l}}\left(\mathbf{b}^l\cdot\nabla\theta_{h,l}\right)\left(\mathbf{u}^l\cdot\mathbf{b}^l\right)dx.
\end{split}
\end{equation}
By using Lemma \ref{2.2.0} and H\"{o}lder's inequality, we obtain from \eqref{3-1-28}
\begin{equation}\label{3-1-29}
\begin{split}
&\int_0^T |I_{21}(t)+I_{41}(t)| dt\\
&=\int_0^T\left|\int_{\Omega_h\setminus\Omega_{h-l}}\left(\mathbf{b}^l\cdot\nabla\theta_{h,l}\right)\left(\mathbf{u}^l\cdot\mathbf{b}^l\right)dx\right|dt\\
                &\leq C\|\mathbf{b}^l\cdot\nabla\theta_{h,l}\|_{L^3(0,T;L^3(\Omega_h\setminus\Omega_{h-l}))}
                \|\mathbf{u}^l\|_{L^3(0,T;L^\infty(\Omega_h\setminus\Omega_{h-l}))}\\
               &\relphantom{=}\|\mathbf{b}^l\|_{L^3(0,T;L^\infty(\Omega_h\setminus\Omega_{h-l}))}|\Omega_h\setminus\Omega_{h-l}|^{1-\frac{1}{3}}\\
&\leq Cl^{-\frac{1}{3}}\|\mathbf{u}\|_{L^3(0,T;L^\infty(\Omega_{2h}))}\|\mathbf{b}\|_{L^3(0,T;L^\infty(\Omega_{2h}))}
\left(l^{\alpha_2}\|\mathbf{b}\|_{L^3(0,T;B_{3,\infty}^{\alpha_2}(\Omega^{\frac{h}{2}}))}+h^{\alpha_2}\right).
\end{split}
\end{equation}
For the term $I_{22}(t)$ and $I_{42}(t)$, we have
\begin{equation}\label{3-1-30}
\begin{split}
&\int_0^T|I_{22}(t)|dt,\int_0^T|I_{42}(t)|dt\\
&\leq C \left[l^{\alpha_1+2\alpha_2-1}\|\mathbf{u}\|_{L^3(0,T;B_{3,\infty}^{\alpha_2}(\Omega^{\frac{h}{2}}))}
\|\mathbf{b}\|^2_{L^3(0,T;B_{3,\infty}^{\alpha_2}(\Omega^{\frac{h}{2}}))}\right.\\
&\left.+l^{-\frac{1}{3}}\left(l^{\alpha_2}\|\mathbf{b}\|_{L^3(0,T;B_{3,\infty}^{\alpha_2}(\Omega^{\frac{h}{2}}))}+h^{\alpha_2}\right)
\|\mathbf{u}\|_{L^3(0,T;L^\infty(\Omega_{2h}))}\|\mathbf{b}\|_{L^3(0,T;L^\infty(\Omega_{2h}))}
\right].
\end{split}
\end{equation}
Combining the estimates \eqref{3-1-29} and \eqref{3-1-30} together, if $\alpha_1,\,\alpha_2>\frac{1}{3}$, we have
\begin{equation}\label{3-1-31}
\int_0^T |I_2(t)+I_4(t)| dt\rightarrow 0, \,\, {\rm as} \,l\rightarrow 0.
\end{equation}

\underline{\bf{Step 5}}. We estimate the pressure term $I_5(t)$. Using the divergence-free property of $\mathbf{u}^l$, we can rewrite the pressure term as
\begin{equation}\label{3-1-32}
\begin{split}
I_5(t)&=-\int_{\Omega^l}\nabla\pi^l\cdot\theta_{h,l}\mathbf{u}^l dx
 =-\int_{\Omega^l}\theta_{h,l}\left(\partial_{x_i}\pi^l\right) u_i^ldx
 =\int_{\Omega^l}\left(\partial_{x_i}\theta_{h,l}\right)\pi^lu_i^ldx\\
 &=\int_{\Omega_h\setminus\Omega_{h-l}}\pi^l\left(\mathbf{u}^l\cdot\nabla\theta_{h,l}\right) dx.
\end{split}
\end{equation}
Using the condition $(ii)$ in Theorem \ref{1.1.}, we deduce
\begin{equation}\label{3-1-33}
\begin{split}
\int_0^T|I_5(t)|dt
=&\int_0^T\left|\int_{\Omega_h\setminus\Omega_{h-l}}\pi^l\mathbf{u}^l\cdot\nabla\theta_{h,l}dx\right|dt\\
\leq& C\left\|\pi^l\right\|_{L^{3/2}(0,T;L^\infty(\Omega_h\setminus\Omega_{h-l}))}\left\|\mathbf{u}^l\cdot\nabla\theta_{h,l}\right\|_{L^3(0,T;L^3(\Omega_h\setminus\Omega_{h-l}))}
\left|\Omega_h\setminus\Omega_{h-l}\right|^{1-\frac{1}{3}}\\
\leq& Cl^{-\frac{1}{3}}\left\|\pi\right\|_{L^{3/2}(0,T;L^\infty(\Omega_{2h}))}
\left(l^{\alpha_1}\|\mathbf{u}\|_{L^3(0,T;B_{s,\infty}^{\alpha_1}(\Omega^{\frac{h}{2}}))}+h^{\alpha_1}\right)\rightarrow0, \,\,{\rm as}\,l\rightarrow 0,
\end{split}
\end{equation}
provided that $\alpha_1 >\frac{1}{3}$.

To conclude the proof of energy conservation, it suffices to combine Steps 2-5 together to get
\begin{equation}\label{3-1-34}
\int_0^T|I(t)|dt\rightarrow 0, \,\,{\rm as}\,l\rightarrow 0,
\end{equation}
provided that $\alpha_1,\,\alpha_2 >\frac{1}{3}$.

Now we prove the cross-helicity conservation. Multiplying the equality \eqref{3-10-1} by $\theta_{h,l}\mathbf{b}^l$ and \eqref{3-10-2} by $\theta_{h,l}\mathbf{u}^l$, and then summing the resulted equalities together and integrating over $\Omega^l$ with respect to $x$, we have
\begin{equation}\label{3-201}
\int_0^t\left(\int_{\Omega^l}\theta_{h,l}\mathbf{u}^l\cdot\mathbf{b}^ldx\right)\chi'(t)dt
+\int_0^t K(t)\chi(t)dt=0,
\end{equation}
where
\begin{equation}\label{3-202}
\begin{split}
K(t)=&-\int_{\Omega^l}div(\mathbf{u}\otimes\mathbf{u})^l\cdot\theta_{h,l}\mathbf{b}^ldx
+\int_{\Omega^l}div(\mathbf{b}\otimes\mathbf{b})^l\cdot\theta_{h,l}\mathbf{b}^ldx\\
&-\int_{\Omega^l}div(\mathbf{b}\otimes\mathbf{u})^l\cdot\theta_{h,l}\mathbf{u}^ldx
+\int_{\Omega^l}div(\mathbf{u}\otimes\mathbf{b})^l\cdot\theta_{h,l}\mathbf{u}^ldx
-\int_{\Omega^l}\nabla\pi^l\cdot\left(\theta_{h,l}\mathbf{b}^l\right) dx.
\end{split}
\end{equation}
Similar to the proof of energy conservation, our main task here is to prove that
\begin{equation}\label{3-777}
K(t)\rightarrow 0, \,\,{\rm in} \,L^1(0,T), \,\, {\rm as} \,l\rightarrow 0.
\end{equation}
Since estimates for each term in \eqref{3-202} are analogous to that in $I(t)$, we omit them here.\;\;\;\;\;\;$\Box$


\end{document}